\theoremstyle{plain}
\newtheorem{theorem}{Theorem}
\newtheorem{lemma}[theorem]{Lemma}
\newtheorem{proposition}[theorem]{Proposition}
\theoremstyle{definition}
\newtheorem{definition}[theorem]{Definition}
\newtheorem{remark}[theorem]{Remark}
\newcommand{\ZZ}{{\mathbb Z}}
\newcommand{\GG}{{\mathcal G}}
\newcommand{\rise}{{\operatorname{r}}}
\newcommand{\risee}{{\operatorname{r_E}}}
\newcommand{\risev}{{\operatorname{r_V}}}
\renewcommand{\AA}{{\mathcal A}}
\newcommand{\acts}{\curvearrowright}
\begin{document}

\title{Orders on trees and free products of left-ordered groups}

\author{Warren Dicks}
\address{Departament de Matem{\`a}tiques, Universitat Aut{\`o}noma de Barcelona, E-08193 Bellaterra (Barcelona), SPAIN}
\email{dicks@mat.uab.cat}
\thanks{The first-named author's research was partially supported by Spain's Ministerio de Ciencia e Innovaci\'on through
 Project MTM2011-25955}

\author{Zoran {\v{S}}uni{\'c}}
\address{Department of Mathematics, Hofstra University, 306 Roosevelt Hall, Hempstead, NY 11549, USA}
\email{zoran.sunic@hofstra.edu}
\thanks{The second-named author's research was partially supported by the National Science Foundation under Grant
No.~DMS-1105520}

\keywords{orderable groups, free products, actions on trees, Bass-Serre tree}
\subjclass[2010]{06F15,20F60,20E08,20E06}

\begin{abstract}
We construct total orders on the vertex set of an oriented tree. 
The orders are based only on up-down counts at the interior vertices and the edges along the unique geodesic from a given vertex to another.

As an application, we provide a short proof (modulo Bass-Serre theory) of Vinogradov's result that the free product of left-orderable groups is left-orderable.
\end{abstract}

\maketitle

\section{Introduction}\label{sec:intro}

In 1949, A. A. Vinogradov~\cite{vinogradov:ordered} used groups of positive units of ordered rings to prove
\begin{center}
(*)  free products of orderable groups are orderable.
\end{center}
In 1977,  D. S. Passman~\cite[Theorem 13.2.7]{passman:group-rings} made it explicit that Vinogradov's argument also shows
\begin{center}
($\dag$) free products of left-orderable groups are left-orderable.
\end{center}

In 1967, R. E. Johnson~\cite{johnson:ordered} simplified Vinogradov's proof of~(*) by using different ordered rings. 
In 1972, C.~Holland and E.~Scrimger~\cite[Theorem 3.1]{holland-scrimger:ordered} made it explicit that Johnson's argument shows~($\dag$), while~R.~G.~Burns and V.~W.~D.~Hale ~\cite[first paragraph]{burns-hale:group-rings} proved~($\dag$) by using the freeness of the kernel of the natural map from a free product to a direct product.
In 1990, G. M. Bergman~\cite[Theorem 16]{bergman:ordered} further simplified Vinogradov's proof of~(*) by using different ordered rings. 

The main purpose of this article is to present yet another proof of~($\dag$). 
More generally, given a  group $G$ acting on an oriented tree $T$ with trivial edge stabilizers and left-ordered vertex stabilizers,  we construct a $G$-invariant order on the vertex set of $T$. 
Applying this construction to the barycentric subdivision of $T$ yields a $G$-invariant order on $T$, that is, on the disjoint union of the vertex set and the edge set. 
Hence,  by Bass-Serre theory,  for any graph of groups in which each edge group is trivial and each vertex group is left-orderable, the fundamental group of this graph of groups is left-orderable; this is another formulation of~($\dag$). 
This proof has the advantages that it is quite simple,  granted the existence of  Bass-Serre trees, and also gives rise to explicit descriptions of positive cones for the fundamental groups in question.

In Section~\ref{sec:trees}, we deal with the case where $G$ is trivial. 
Explicitly, given any oriented tree $T$ together with a total order on the link of each vertex (that is, the set of all edges adjacent to said vertex),  we define a total order on the vertex set of $T$ using only up-down counting at the interior vertices and the edges along the geodesic from a given vertex to another.

In Section~\ref{s:order-products},  we  combine our order with group actions, using naturality. 
For concreteness, we prove~($\dag$) by considering a Bass-Serre tree.  
This gives an explicit order for free products that is easy to state, and we then give the description of the positive cone in Subsection~\ref{subsec:explicitfreeproduct}.

In Subsection~\ref{subsec:explicitfreegroup}, we consider the  Bass-Serre trees that are Cayley graphs of finitely generated free groups, and
find that the resulting total orders on the free groups coincide with those defined in~\cite{sunic:free-lex} and~\cite{sunic:from-oriented}.

In Subsection~\ref{subsec:rooted}, we discuss a connection to depth-first searches on rooted trees.

\begin{remark}
Our approach gives constructive information  about groups acting on trees with trivial edge stabilizers. 
For the much more complicated case of arbitrary edge stabilizers, I.~M. Chiswell~\cite{chiswell:ordered} applied important work of V.~V. Bludov and A.~M.~W. Glass~\cite{bludov-glass:ordered} to give a non-constructive proof that a group $G$ acting on a tree $T=(V,E)$ is left-orderable if (and only if) there exists a family $(\mathcal{R}_v:v \in V)$ such that, for each $v \in V$, the following hold:
(1) $\mathcal{R}_v$ is a nonempty set of left-invariant orders on the $G$-stabilizer $G_v$;
(2) for each $g \in G$, $\null^g(\mathcal{R}_v) = \mathcal{R}_{gv}$;
(3) for each $r \in \mathcal{R}_v$ and each edge $e$ adjacent to $v$, if $w$ is the other vertex of $e$, then the
restriction of $r$ to $G_e$ extends to some element of $\mathcal{R}_w$.
\end{remark}

\section{Ordering trees}\label{sec:trees}

In this section, we describe a total order on the vertex set of a tree in terms of an orientation together with a total order on the set of edges adjacent to each vertex, by  up-down counting along the geodesic from a given vertex to another.

An oriented tree $T=(V,E)$ is a tree (a nonempty connected graph with no cycles) in which every edge has an assigned orientation. 
Thus, one of the endpoints of every edge $e$ is declared the origin, denoted by $\operatorname{o}(e)$, and the other the terminus, denoted by $\operatorname{t}(e)$.
To each edge $e$ we associate an edge $e^{-1}$ (this is not an edge in $E$), for which $\operatorname{o}(e^{-1})=\operatorname{t}(e)$ and
$\operatorname{t}(e^{-1})=\operatorname{o}(e)$, and we call
it the edge inverse to $e$. For $e \in E$, we set $(e^{-1})^{-1} = e$ and declare $e$ inverse to $e^{-1}$. The edges in $E$ are called positively oriented, and their inverse edges negatively oriented.

A geodesic of length $n \geq 0$ in the tree $T$ is a sequence
\begin{equation}\label{e:path}
 p = v_0~e_1^{\varepsilon_1}~v_1~e_2^{\varepsilon_2}~v_2~\dots~v_{n-1}~e_n^{\varepsilon_n}~v_n
\end{equation}
such that $v_0,\dots,v_n$ are distinct vertices, $e_1,\dots,e_n$ are distinct edges in $E$, $\varepsilon_i=\pm 1$, for
$i=1,\dots,n$, and 
\begin{align*}
 \operatorname{o}(e_i^{\varepsilon_i})     &= v_{i-1} \\
 \operatorname{t}(e_{i}^{\varepsilon_{i}}) &= v_{i},  
\end{align*}
for $i=1,\dots,n$. We say that the geodesic $p$ given in~\eqref{e:path} is a geodesic from the vertex $v_0$ to the vertex $v_n$. For any  vertices $x$ and $y$ of $T$, there exists a unique geodesic in $T$, denoted by $p_{xy}$, from $x$ to $y$.

We say that a rise occurs at the edge $e_i$, $i=1,\dots,n$, along the geodesic $p$ given in~\eqref{e:path} if $\varepsilon_i =1$ and that a fall occurs at that edge if $\varepsilon_i=-1$. 
We define the \emph{edge-rise index} $\risee(p)$ along the geodesic $p$ given in~\eqref{e:path} by
\[
 \risee(p)= \#(\textup{rises at edges along }p) - \#(\textup{falls at edges along }p) = \sum_{i=1}^n{\varepsilon_i}.
\]

Assume that, for each vertex $v \in V$, a total order $\preceq_v$, called the \emph{local order} at $v$, is given on the set $E_v = \{e\in E \mid \operatorname{o}(e)=v \text{ or } \operatorname{t}(e)=v\}$ of edges adjacent to $v$. 
We say that a rise occurs at the vertex $v_i$, $i=1,\dots,n-1$, along the geodesic $p$ given in~\eqref{e:path} if
$e_i \prec_{v_i} e_{i+1}$ and that a fall occurs at that vertex if $e_i \succ_{v_i} e_{i+1}$. 
We define the \emph{vertex-rise index} $\risev(p)$ along the geodesic $p$ as the difference
\[
 \risev(p)= \#(\textup{rises at vertices along }p) - \#(\textup{falls at vertices along }p)
\]
between the number of rises and the number of falls encountered at the vertices along the geodesic $p$ (note that the extremal vertices $v_0$ and $v_n$ play no role in the vertex-rise index, only the interior vertices along the geodesic matter).

\begin{definition}\label{d:m}
Let $T=(V,E)$ be an oriented tree with a local  order at every vertex. 
For vertices $x,y \in V$, define the \emph{rise index} $\rise(x,y)$ as the sum of the edge-rise index and the vertex-rise index along the geodesic $p_{xy}$ from $x$ to $y$, i.e.,
\[
 \rise(x,y) =  \risee(p_{xy}) + \risev(p_{xy}).
\]
\end{definition}

\begin{theorem}\label{t:tree-order}
Let $T=(V,E)$ be an oriented tree with a local order at every vertex. 
The binary relation $\leq$ defined on the set $V$ by
\[
 x \leq y \quad \iff \quad \rise(x,y) \geq 0
\]
is a total order on $V$.
\end{theorem}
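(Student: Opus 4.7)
The plan is to verify the four properties of a total order, using two preliminary facts about $\rise$. Reflexivity is immediate: the trivial geodesic from $x$ to $x$ has no edges and no interior vertices, so $\rise(x,x)=0$. The first preliminary fact is that $\rise(y,x)=-\rise(x,y)$, because reversing the geodesic $p_{xy}$ to obtain $p_{yx}$ flips each $\varepsilon_i$ (turning each edge-rise into a fall and vice versa) and swaps the two consecutive edges compared at each interior vertex (turning each vertex-rise into a fall and vice versa), so both $\risee$ and $\risev$ change sign. The second preliminary fact is a parity statement: when $x\neq y$, $\rise(x,y)$ is odd, since a geodesic of length $n\geq 1$ contributes $n$ edge-signs and $n-1$ vertex-signs, a total of $2n-1$ summands of $\pm 1$. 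Totality is then immediate from the first fact, and antisymmetry follows from the combination of both: if $x\leq y$ and $y\leq x$ then $\rise(x,y)=0$, forcing $x=y$ by the parity fact.

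The main work is transitivity. Assume $x,y,z$ are pairwise distinct with $x\leq y$ and $y\leq z$; the parity fact gives $\rise(x,y)\geq 1$ and $\rise(y,z)\geq 1$, and I need $\rise(x,z)\geq 0$. Let $m$ be the median of $\{x,y,z\}$ in $T$: the unique vertex lying on all three geodesics $p_{xy}$, $p_{yz}$, $p_{xz}$. In the generic case $m\notin\{x,y,z\}$, the three edges of $E_m$ that start $p_{mx}$, $p_{my}$, $p_{mz}$ are distinct, and each of the three geodesics splits through $m$ (for example, $p_{xz}$ is the concatenation of $p_{xm}$ and $p_{mz}$), introducing a single extra vertex contribution $\epsilon_{xy}, \epsilon_{yz}, \epsilon_{xz}\in\{-1,+1\}$ at $m$. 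Using the first preliminary fact to cancel the $\rise(x,m)$ and $\rise(m,z)$ terms, one obtains the identity
\[
 \rise(x,z)-\rise(x,y)-\rise(y,z) \;=\; \epsilon_{xz}-\epsilon_{xy}-\epsilon_{yz}.
\]
A direct check through the six linear orderings of those three edges in $\preceq_m$ shows the right-hand side always lies in $\{-1,+1\}$, hence $\rise(x,z)\geq 1+1-1=1$. When $m$ coincides with one of $x,y,z$, one of the three geodesics is the concatenation of the other two (for instance, if $m=y$ then $p_{xz}$ is $p_{xy}$ followed by $p_{yz}$), and the identity reduces to $\rise(x,z)=\rise(x,y)+\rise(y,z)\pm 1$, so the same bound holds.

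The main obstacle is organizing the transitivity step: the position of $m$ relative to $\{x,y,z\}$ splits into a few geometric configurations, and the $\pm 1$ contributions $\epsilon_{\bullet}$ at $m$ have to be tracked carefully in each. The parity fact is what makes the argument close — it ensures that each nonnegative rise index is actually at least $1$, providing just enough slack to absorb the inevitable $-1$ coming from the local order at the median.
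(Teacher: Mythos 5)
Your proposal is correct and follows essentially the same route as the paper: the parity argument for nonvanishing, antisymmetry by reversing the geodesic, and a tripod versus line-segment case analysis in which everything cancels except a single $\pm 1$ vertex contribution at the median. The only cosmetic differences are that the paper packages the order axioms into a small abstract lemma (the condition $\rise(x,y)+\rise(y,z)+\rise(z,x)\le 1$) and disposes of the median contributions by observing that the two terms involving the largest of the three edges at the branch vertex cancel, where you instead check the six possible local orderings directly.
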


The proof is based on the following result.

\begin{lemma}\label{l:order}
Let $V$ be a set and $m: V \times V \to \ZZ$ a function such that, for all $x, y \in V$,
\begin{gather}
x \ne y \implies m(x,y)\ne 0, \label{e:1} \tag{i}\\
 m(x,y) = -m(y,x), \label{e:2} \tag{ii}
\end{gather}
and for all $x,y,z \in V$,
\begin{equation}\label{e:4} \tag{iii}
 m(x,y) + m(y,z) + m(z,x) \le 1.
\end{equation}
Then the relation $\leq$ defined on $V$ by
\[
 x \leq y \quad \iff \quad m(x,y) \geq 0
\]
is a total order on $V$.
\end{lemma}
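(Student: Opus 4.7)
The plan is to verify the four defining properties of a total order (reflexivity, antisymmetry, totality, transitivity) directly from the three hypotheses on $m$. Three of the four will be almost immediate; transitivity will be where the integrality assumption $m \colon V \times V \to \ZZ$ actually does work.

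First, taking $x = y$ in (ii) gives $m(x,x) = -m(x,x)$, so $m(x,x) = 0 \ge 0$, which yields reflexivity $x \le x$. For antisymmetry, if $x \le y$ and $y \le x$, then $m(x,y) \ge 0$ and $m(y,x) \ge 0$; by (ii) both must be zero, and then the contrapositive of (i) forces $x = y$. For totality, fix $x \ne y$; by (i) $m(x,y) \ne 0$, and by (ii) exactly one of $m(x,y), m(y,x)$ is strictly positive, so either $x \le y$ or $y \le x$ holds.

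The main obstacle is transitivity. Suppose $x \le y$ and $y \le z$, so $m(x,y) \ge 0$ and $m(y,z) \ge 0$; we want $m(x,z) \ge 0$, or equivalently (by (ii)) $m(z,x) \le 0$. If any two of $x,y,z$ coincide the conclusion is immediate from reflexivity and the hypotheses, so assume the three are pairwise distinct. The key trick is that $m$ takes values in $\ZZ$, so condition (i) upgrades the weak inequalities $m(x,y) \ge 0$ and $m(y,z) \ge 0$ to the strict integer bounds $m(x,y) \ge 1$ and $m(y,z) \ge 1$. Plugging these into (iii) gives
\[
m(z,x) \;\le\; 1 - m(x,y) - m(y,z) \;\le\; 1 - 1 - 1 \;=\; -1,
\]
so $m(z,x) < 0$, hence $m(x,z) > 0$ and $x \le z$. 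This is the only place where integrality (rather than just an ordered-abelian-group structure on the codomain) is used, and it is exactly what makes the $\le 1$ on the right-hand side of (iii) — as opposed to $\le 0$ — sufficient for transitivity.

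Once these four properties are in hand, $\le$ is by definition a total order on $V$, completing the proof.
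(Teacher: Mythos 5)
Your proof is correct and follows essentially the same route as the paper: reflexivity from (ii), antisymmetry from (i) and (ii), totality from (ii), and transitivity by using integrality together with (i) to upgrade $m(x,y)\ge 0$, $m(y,z)\ge 0$ to $\ge 1$ and then feeding these into (iii). The only cosmetic differences are that you rearrange the transitivity inequality to bound $m(z,x)$ rather than $m(x,z)$ and you spell out the degenerate cases where two of $x,y,z$ coincide, which the paper leaves implicit.
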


\begin{proof}
Reflexivity follows from~\eqref{e:2}, while anti-symmetry follows from~\eqref{e:1} and~\eqref{e:2}.
If $x,y,z\in V$ are such that $x < y$ and $y < z$, then $m(x,y) \stackrel{\textup{(i)}}{\geq} 1$, $m(y,z)
\stackrel{\textup{(i)}}{\geq} 1$, and therefore
\[
 m(x,z) \stackrel{\textup{(iii)}}{\geq} m(x,z) + (m(x,y) +m(y,z)+m(z,x) - 1) 
\stackrel{\textup{(ii)}}{=} m(x,y)+m(y,z)-1 \geq 1,
\]
which means that $x < z$. 
Thus $\leq$ is an order on $V$. 
The order is total by~\eqref{e:2}, which ensures that at least one of the integers $m(x,y)$ and $m(y,x)$ must be nonnegative.
\end{proof}

\begin{proof}[Proof of Theorem~\ref{t:tree-order}]
We just need to verify that the rise index function $\rise: V \times V \to \ZZ$ from Definition~\ref{d:m} satisfies the conditions of Lemma~\ref{l:order}.

For $x, y \in V$, suppose that $x \ne y$, and let $p=p_{xy}$, as given in~\eqref{e:path}, be the geodesic from $x$ to $y$. 
Then $n \ge 1$. 
Since each of the $n$ edges and each of the $n-1$ interior vertices contribute $\pm 1$ to the rise index, and $n+(n-1)$ is odd, the rise index $\rise(x,y)$ is odd, and, hence, nonzero.

For $x, y \in V$, the geodesic  $p_{xy}$  from $x$ to $y$ has the same edges and vertices as the geodesic $p_{yx}$ from $y$ to $x$,  but in reversed order and with opposite edge orientations. 
Therefore the sign of the contribution of each edge to the rise index is switched, as is the sign of each vertex contribution. 
Therefore, $\rise(x,y) = - \rise(y,x)$.

For  $x, y, z \in V$,  it remains to show that the sum 
\[ 
 s=\rise(x,y)+\rise(y,z)+\rise(z,x)
\]
is at most $1$. 
If two of $x, y, z$ are equal, then it follows from the previous paragraph that $s=0$. 
Thus, we may assume that  $x$, $y$ and $z$ are three distinct elements of $V$, and it suffices to show that $s = \pm 1$.

The three geodesics $p_{xy}$, $p_{yz}$, and $p_{zx}$ either form a tripod, as in Figure~\ref{f:tripod}, or two of these three geodesics are subgeodesics of the inverse of the third, say $p_{xy}$ and $p_{yz}$ are subgeodesics of $p_{zx}^{-1}=p_{xz}$, with overlap $y$, as in Figure~\ref{f:3paths-no}.

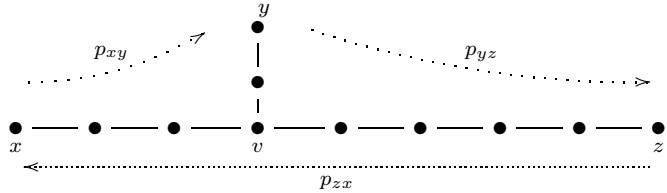
\begin{figure}[!ht]
\[
\xymatrix@R=5pt@C=5pt{
 &&&&&& \stackrel{\mkern10mu y}{\bullet} \ar@{-}[d] &
 \ar@{..>}@/_/[drrrrrrrrr]^{p_{yz}}   &&&&&&&&
 \\
 \ar@{..>}@/_/[urrrrr]^{p_{xy}} &&&&&& \bullet \ar@{-}[d] &&&&&&&&&&
 \\
 \bullet \ar@{-}[rr] \ar@{}[d]|{x} && \bullet \ar@{-}[rr] &&
 \bullet \ar@{-}[rr] && \bullet \ar@{-}[rr] \ar@{}[d]|{v}&&
 \bullet \ar@{-}[rr] && \bullet \ar@{-}[rr] &&
 \bullet \ar@{-}[rr] && \bullet \ar@{-}[rr] && \bullet \ar@{}[d]|{z}
 \\
 &&&&&&&& &&&&&&&& \ar@{..>}[llllllllllllllll]^{p_{zx}}
}
\]
\caption{Tripod case: $p_{xy}$, $p_{yz}$, and $p_{zx}$ form a tripod}
\label{f:tripod}
\end{figure}

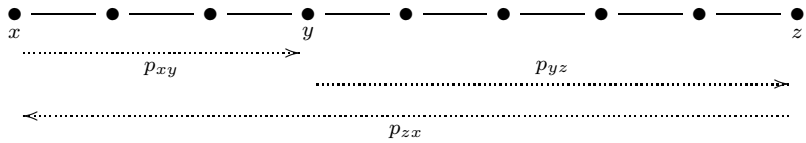
\begin{figure}[!ht]
\[
\xymatrix@R=5pt{
 \bullet \ar@{-}[r] \ar@{}[d]|{x} & \bullet \ar@{-}[r] & \bullet \ar@{-}[r]&
 \bullet \ar@{-}[r] \ar@{}[d]|{y} & \bullet \ar@{-}[r] & \bullet \ar@{-}[r]&
 \bullet \ar@{-}[r]               & \bullet \ar@{-}[r] & \bullet \ar@{}[d]|{z}
 \\
 \ar@{..>}[rrr]_{p_{xy}} &&&& &&&& \\
 &&& \ar@{..>}[rrrrr]^{p_{yz}} &&&&& \\
 &&&& &&&& \ar@{..>}[llllllll]^{p_{zx}}
}
\]
\caption{Line-segment case with overlap $y$: $p_{xy}$ and $p_{yz}$ are subgeodesics of $p_{zx}^{-1}$}
\label{f:3paths-no}
\end{figure}

In the tripod case (Figure~\ref{f:tripod}) the edge-rise contributions to the sum $s$ cancel (each edge in the tripod is traversed once in each direction), as do all vertex-rise contributions except for the three at the vertex $v$. 
However, two of the contributions at $v$ cancel. 
Indeed, there are three edges in the tripod that are adjacent to $v$ and one is the largest among them in the local order at $v$. The two vertex-rise contributions at $v$ obtained by entering and exiting this largest edge cancel. 
Therefore $s = \pm1$.

In the line-segment case (Figure~\ref{f:3paths-no}) the edge-rise contributions to the sum $s$ cancel, as do all vertex-rise contributions except for the one at the vertex $y$ along the geodesic $p_{zx}$. Therefore $s = \pm1$.
\end{proof}

\begin{remark}
We do not require that trees are locally finite (this is important for our application to free products).
\end{remark}

\begin{remark}\label{r:path}
All edges in the geodesic $p$ given in~\eqref{e:path} are positively oriented if and only if 
\[
 v_0 <  v_1 <   v_2 < \dots < v_{n-1}   < v_n.
\]
Note that this does not depend on the local orders at the vertices.
\end{remark}

\begin{remark}
Let $T=(V,E)$ be an oriented tree with a local order at every vertex. 
Let $T' =(V', E')$ be the barycentric subdivision of $T$, in which the midpoint of each edge of $T$ is viewed as a vertex in $T'$. 
Formally, we introduce new sets $E_0$, $E_-$, and $E_+$, given with bijections $E \to E_0$, $e \mapsto e_0$, $E \to E_-$, $e \mapsto e_-$, and $E \to E_+$, $e \mapsto e_+$. 
We then take disjoint unions $V' = V \sqcup E_0$ and
$E' = E_- \sqcup E_+$.  For each $e \in E$, we set 
$\operatorname{o}(e_-)= \operatorname{o}(e)$, 
$\operatorname{t}(e_-)= e_0$, $\operatorname{t}(e_+) = e_0$, and  $\operatorname{o}(e_+)= \operatorname{t}(e)$. 
It is not difficult to check that $T'$ is an oriented tree. 
The local order at a vertex $v$ in $T$ induces a local order at the same vertex $v$ viewed in $T'$. 
For the vertex $e_0$ in $T'$,  we take the local order  given by $e_- \prec_{e_0} e_+$. 
Now Theorem~\ref{t:tree-order} applied to $T'$ gives a total order on $V'$, and this restricts to the  total order on $V$ given by Theorem~\ref{t:tree-order} applied to $T$. 
Notice that $V'$ is in bijective correspondence with  the set that underlies $T$, $V \sqcup E$. 
In summary, Theorem~\ref{t:tree-order} gives a total order on $T$.
\end{remark}

\section{Ordering free products}\label{s:order-products}

In this section,  we  incorporate group actions into the results of the previous section.
We then consider a special Bass-Serre tree to obtain an easy-to-state order for free products.
We shall describe the positive cone in Subsection~\ref{subsec:explicitfreeproduct}.

\begin{proposition}\label{p:action-order}
Let $G$ be a group, $T=(V,E)$ an oriented tree with a local  order at every vertex, and $G \acts T$ a left action by orientation preserving tree automorphisms.

{\rm(a)} If the action $G \acts T$ preserves the local orders (for all $g \in G$, $v \in V$ and edges $e$ and $f$ adjacent to $v$, if $e \preceq_v f$ then $ge \preceq_{gv} gf$), then it preserves the order induced on the set $V$, i.e., for all $ g \in G$ and $x,y \in  V$,
\[
 x \leq y \implies gx \leq gy.
\]

{\rm (b)} If the stabilizer of some vertex $x$  is trivial, then
\[
 g \leq_G h \iff gx \leq hx
\]
defines a left-invariant order $\leq_G$ on $G$.
\end{proposition}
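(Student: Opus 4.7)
The plan is to exploit the naturality of the rise index under order- and orientation-preserving automorphisms, and then deduce part (b) as a formal consequence of part (a) together with the injectivity of the orbit map.

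For part (a), I would first observe that any tree automorphism $g$ carries the geodesic $p_{xy}$ to the geodesic $p_{gx,gy}$ edge by edge and vertex by vertex; writing out $p_{xy}$ as in~\eqref{e:path}, its image under $g$ is obtained by replacing each $v_i$ with $gv_i$ and each $e_i$ with $ge_i$, with the exponents $\varepsilon_i$ unchanged. Since $g$ preserves edge orientations, rises at edges along $p_{xy}$ correspond to rises at edges along $p_{gx,gy}$, so $\risee(p_{gx,gy}) = \risee(p_{xy})$. Since $g$ preserves the local orders, the comparison $e_i \prec_{v_i} e_{i+1}$ holds if and only if $ge_i \prec_{gv_i} ge_{i+1}$, so $\risev(p_{gx,gy}) = \risev(p_{xy})$. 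Adding the two identities gives $\rise(gx,gy) = \rise(x,y)$, from which $x \le y \implies gx \le gy$ follows immediately from the definition of $\le$ in Theorem~\ref{t:tree-order}.

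For part (b), the hypothesis that the stabilizer of $x$ is trivial means the orbit map $G \to V$, $g \mapsto gx$, is injective. Pulling back the total order on the subset $Gx \subseteq V$ along this injection automatically yields a total order $\le_G$ on $G$; reflexivity, antisymmetry, transitivity and totality are inherited from the corresponding properties of $\le$ restricted to $Gx$, which are supplied by Theorem~\ref{t:tree-order}. Left-invariance is precisely what part (a) buys us: if $g \le_G h$, i.e., $gx \le hx$, then for any $k \in G$, part (a) applied to the automorphism $k$ gives $k(gx) \le k(hx)$, i.e., $(kg)x \le (kh)x$, i.e., $kg \le_G kh$.

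There is no real obstacle here; the only step that might look like it requires care is checking that preserving local orders translates into preserving the vertex-rise count, but this is immediate from the definition, since a rise at the interior vertex $v_i$ of $p_{xy}$ is defined by the single comparison $e_i \prec_{v_i} e_{i+1}$, which is preserved by $g$ under the hypothesis of part (a). Thus the entire proof reduces to unwinding definitions and invoking Theorem~\ref{t:tree-order}.
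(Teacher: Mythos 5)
Your argument is correct and is essentially the paper's own proof: part (a) by noting that an orientation- and local-order-preserving automorphism carries $p_{xy}$ to $p_{gx,gy}$ with the same edge-rise and vertex-rise indices, hence preserves $\rise$, and part (b) by pulling back the order on the free orbit $Gx$ along the injective orbit map, with left-invariance supplied by (a). No gaps; you have merely spelled out the details the paper declares ``clear.''
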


\begin{proof}
(a) For every geodesic $p$ in $T$, the geodesic $gp$ has the same edge-rise index and vertex-rise index as the geodesic $p$, since the action preserves the edge orientations and the local orders. 
Therefore, the action preserves the rise index and, consequently, the induced order on the vertex set.

(b) Clear, since $Gx$ is a totally ordered set on which $G$ acts freely and by order-preserving transformations.
\end{proof}

\begin{remark}  If a group $G$ acts on a tree $T$ such that each edge stabilizer  is trivial and each vertex stabilizer  is left-orderable, it is not difficult to show that orientations and local orders can be defined satisfying the conditions in (a). 
It follows by Bass-Serre theory that if a group $G$ is the fundamental group of a graph of groups with trivial edge groups and left-orderable vertex groups, then $G$ is  left-orderable.
Notice that we may ensure the existence of a vertex with trivial stabilizer by adding a vertex with trivial vertex group joined to any existing vertex by an added edge with trivial edge group; this does not change the fundamental group of any graph of groups.
\end{remark}

We now expand on the foregoing remark to obtain an explicit ordering in the case of free products.

Let $I$ be an indexing set and $\GG= ((G_i,\leq_i) \mid i \in I)$ a family of left-ordered groups. Assume that $I$ does not contain 0, let $I_0 = I \cup\{0\}$ and $G_0$ be the trivial group. In order to avoid technical and
notational difficulties, it is assumed that the groups in the family $\GG$ are disjoint.

The free product $G=*_{i \in I} G_i$ can be realized as the fundamental group of the tree of groups 
\[
\xymatrix@R=15pt{
 &  G_0 \ar[dl]_{i} \ar[d]_{i'} \ar@{..>}[dr]^{} &
 \\
 G_i & G_{i'} & \dots
}
\]
in which the vertex groups are those already indexed by $I_0$, the edge groups are trivial and indexed by $I$, and the edge indexed by $i \in I$ connects the vertex indexed by $0$ to the vertex indexed by $i$. 
The Bass-Serre tree $T=(V,E)$ corresponding to this tree of groups can be described as follows. 
The set of vertices
\[
 V = \bigsqcup_{i \in I_0} \{gG_i \mid g \in G \}
\]
is the disjoint union of the sets of left cosets of the vertex groups, the set of edges is
\[ 
 E= G \times I, 
\]
and each edge $(g,i) \in E$ connects $g=gG_0$ to $gG_i$.
For $i \in I_0$,  the vertices $gG_i$ are called $i$-vertices.

A left action $G \acts T$ of the free product $G$ on the Bass-Serre tree is given by left multiplication ($h(gG_i)=hgG_i$ and $h(g,i) = (hg,i)$). The action of $G$ on $T$ is free on the 0-vertices.

Let $\leq_I$ be a total order on $I$. We define a  local order at every vertex of the Bass-Serre tree $T$ as follows.
If $g$ is one of the 0-vertices then the set of edges adjacent to $g$ is
\[
 E_{g} = \{ (g,i) \mid i \in I \},
\]
all of these edges are oriented away from $g$, and we set
\[
 (g,i) \preceq_{g} (g,i') \iff i \leq_I i'.
\]
The local order at the $i$-vertices, for $i \in I$, is defined as follows. For $g \in G$, the set of edges adjacent to $gG_i$ is
\[
 E_{gG_i} = \{ (gh,i) \mid h \in G_i \},
\]
all of these edges are oriented toward $gG_i$, and we set
\[
 (gh,i) \preceq_{gG_i} (gh',i) \iff h \leq_i h';
\]
this definition does not depend on the choice of representative $g$ of $gG_i$, since $\leq_i$ is left $G_i$-invariant.
Thus, the local order at the 0-vertices is induced by the total order $\leq_I$ on $I$, and at the $i$-vertices,
for $i \in I$, by the total order $\leq_i$ on $G_i$.

\begin{proposition}\label{p:action-is-good}
The left action $G \acts T$ of the free product $G$ on the Bass-Serre tree preserves the edge orientations and the local order at every vertex.
\end{proposition}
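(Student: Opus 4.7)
The proof will be a direct verification from the definitions of the action and of the local orders, split into the two claims (preservation of edge orientations, and preservation of local orders at the two types of vertices).

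First, I will check that the action preserves edge orientations. An edge $(g,i)\in E$ has $\operatorname{o}((g,i))=g=gG_0$ and $\operatorname{t}((g,i))=gG_i$. For any $h\in G$, the edge $h\cdot(g,i)=(hg,i)$ has $\operatorname{o}((hg,i))=hg=hgG_0=h\cdot\operatorname{o}((g,i))$ and $\operatorname{t}((hg,i))=hgG_i=h\cdot\operatorname{t}((g,i))$, so orientations are preserved.

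Next, the local orders at $0$-vertices. The edges adjacent to the $0$-vertex $g$ are $\{(g,i)\mid i\in I\}$, ordered by $(g,i)\preceq_g(g,i')\iff i\leq_I i'$. Under the action of $h\in G$, the vertex $g$ maps to $hg$ and the edge $(g,i)$ maps to $(hg,i)$; since the local order at $hg$ is again defined only in terms of the index $i$, the relation $(g,i)\preceq_g(g,i')$ translates to $(hg,i)\preceq_{hg}(hg,i')$. Preservation at $0$-vertices is therefore automatic.

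The only step requiring a moment of care is the local order at an $i$-vertex $gG_i$ for $i\in I$. By definition $(gh_1,i)\preceq_{gG_i}(gh_2,i)\iff h_1\leq_i h_2$, a definition that the text has already observed is independent of the chosen coset representative by left $G_i$-invariance of $\leq_i$. Given $k\in G$, the vertex $gG_i$ is sent to $kgG_i$ and the edges $(gh_j,i)$ are sent to $(kgh_j,i)=((kg)h_j,i)$ for $j=1,2$. Using $kg$ as the representative of the coset $kgG_i$, the local order at $kgG_i$ gives $(kgh_1,i)\preceq_{kgG_i}(kgh_2,i)\iff h_1\leq_i h_2$. Hence $(gh_1,i)\preceq_{gG_i}(gh_2,i)$ if and only if $(kgh_1,i)\preceq_{kgG_i}(kgh_2,i)$, which is exactly the required compatibility.

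I do not expect any genuine obstacle here; the statement is essentially a bookkeeping check that the definitions were set up equivariantly. The one place where one might be tempted to worry is the $i$-vertex case, where the representative $g$ of the coset $gG_i$ shifts to $kg$ under the action; but the parametrization of edges by elements of $G_i$ does not change, so the induced order is identical.
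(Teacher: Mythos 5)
Your proposal is correct and follows essentially the same route as the paper: a direct check that the action sends $0$-vertices to $0$-vertices and $i$-vertices to $i$-vertices compatibly with the orientations, and that the local orders at both types of vertices are preserved, the $i$-vertex case being handled exactly as in the paper by shifting the coset representative from $g$ to $kg$ while the $G_i$-parametrization of the adjacent edges stays fixed. The only difference is that you spell out the orientation check, which the paper dismisses as immediate.
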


\begin{proof}
We only need to prove that the action preserves the local orders.

Let $i \in I$ and $g \in G$. 
Consider two edges adjacent to the vertex $gG_i$; these are then of the form $(gh_1,i)$ and $(gh_2,i)$  with $h_1,h_2 \in G_i$.  
For any $g' \in G$,
\[
 (gh_1,i) \preceq_{gG_i} (gh_2,i) \implies h_1 \leq_i h_2 \implies (g'gh_1,i) \preceq_{g'gG_i} (g'gh_2,i).
\]
Therefore, the action of $G$ preserves the local orders at all $i$-vertices.

Let $g$ be a 0-vertex and consider the action of any $g' \in G$. 
The action moves the vertex $g$ to $g'g$ and an edge $(g,i) \in E_{g}$ to the edge $(g'g,i) \in E_{g'g}$. 
For $i_1,i_2 \in I$,
\[
 (g,i_1) \preceq_{g} (g,i_2) \implies i_1 \leq_I i_2 \implies (g'g,i_1) \preceq_{g'g} (g'g,i_2).
\]

Therefore, the action of $G$ preserves the local orders at all vertices.
\end{proof}

\begin{theorem}[Vinogradov]\label{t:free-product}
The free product $G=*_{i\in I} G_i$ of a family $(G_i \mid i \in I)$ of left-orderable groups is left-orderable.
\end{theorem}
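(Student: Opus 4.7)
The plan is to assemble the theorem as a direct consequence of the machinery already developed in Section~\ref{sec:trees} and the first half of Section~\ref{s:order-products}. Specifically, I want to package the Bass-Serre tree $T$ constructed above, together with its edge orientations and local orders, into an input for Theorem~\ref{t:tree-order}, and then transport the resulting total order on the vertex set $V$ back to $G$ via the action.

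First, I would fix, for each $i\in I$, a left-invariant total order $\leq_i$ on $G_i$ (these exist by hypothesis), and fix any total order $\leq_I$ on $I$. These determine local orders at every vertex of $T$ exactly as described immediately before Proposition~\ref{p:action-is-good}. Applying Theorem~\ref{t:tree-order} to the oriented tree $T$ with these local orders produces a total order $\leq$ on $V$.

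Next, I would invoke Proposition~\ref{p:action-is-good} to conclude that the left action $G\acts T$ preserves the edge orientations and the local orders. This is precisely the hypothesis of Proposition~\ref{p:action-order}(a), so the induced order $\leq$ on $V$ is $G$-invariant. It remains to find a vertex with trivial $G$-stabilizer: the $0$-vertex $x=G_0$ works, since $G_0$ is the trivial group and, as noted in the construction, $G$ acts freely on the set of $0$-vertices (alternatively, the stabilizer of $gG_0$ is the conjugate $gG_0g^{-1}=\{1\}$). Proposition~\ref{p:action-order}(b) applied to $x=G_0$ then yields a left-invariant total order on $G$, defined by $g\leq_G h \iff gG_0 \leq hG_0$, which proves the theorem.

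There is essentially no obstacle, since the substantive content has already been isolated in Theorem~\ref{t:tree-order} (the order on a tree) and in the two propositions (the interaction with group actions and the particular local orders coming from $\leq_I$ and the $\leq_i$). The only thing to double-check when writing the final paragraph is that the chosen reference vertex really has trivial stabilizer, and that Proposition~\ref{p:action-order}(b) is being applied with the correct hypotheses; both are immediate from the description of the Bass-Serre tree given just above Proposition~\ref{p:action-is-good}.
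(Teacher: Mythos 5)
Your proposal is correct and follows the paper's proof essentially verbatim: both assemble the theorem from Proposition~\ref{p:action-is-good}, Proposition~\ref{p:action-order} (parts (a) and (b)), and the observation that the stabilizer of the $0$-vertex $1G_0$ in the Bass-Serre tree is trivial. No gaps.
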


\begin{proof}
Follows directly from Proposition~\ref{p:action-is-good}, Proposition~\ref{p:action-order}, and the fact that the stabilizer of the vertex 1 (vertex $1G_0$) in the Bass-Serre tree is trivial. 
\end{proof}

\section{Examples}

\subsection{An explicit order on the free product $G=*_{i \in I} G_i$}\label{subsec:explicitfreeproduct}

We continue the discussion of the free product $G=*_{i \in I} G_i$ of the family of left-ordered groups $\GG=((G_i,\leq_i)\mid i \in I)$ and we provide a concrete description of the left-invariant order $\leq$ on $G$
that extends the given orders on the factors and is implicit  in the proof of Theorem~\ref{t:free-product}. 
A total order $\leq_I$ on $I$ is assumed.

\begin{definition}\label{d:tau}
Define a weight function $\tau:G \to \ZZ$ as follows. 
Recall that every element $g$ of the free product $G=*_{i\in I} G_i$ can be written uniquely in the normal form
\[
 g = g_1g_2g_3 \dots g_n,
\]
where $g_j$ is a nontrivial element of $G_{i_j}$, for $j=1,\dots,n$,  and $i_j \ne i_{j+1}$, for $j = 1,\dots,n-1$.
The factors in the normal form are called the syllables of $g$, and
$n$ is called the syllable length of $g$. 
The syllable length of the trivial element is 0 and its normal form is the
empty word (denoted by 1). 
Let
\begin{align*}
 \tau(g) =
  &\#(\textup{positive syllables in }g)
  - \#(\textup{negative syllables in }g) + \\
  &\#(\textup{index jumps in }g) - \#(\textup{index drops in }g),
\end{align*}
where an index jump occurs at $j$ in $g$, for $j=1,\dots,n-1$, if  $i_j <_I i_{j+1}$ and an index drop occurs at $j$ in $g$ if $i_j >_I i_{j+1}$.
\end{definition}

\begin{proposition}
Let $G=*_{i\in I} G_i$ be the free product of a family $\GG=((G_i,\leq_i) \mid i \in I)$ of left-ordered groups.

The relation $\leq$ defined on $G$ by
\[
 g \leq h \quad \iff \quad \tau(g^{-1}h) \geq 0,
\]
is a left-invariant order on $G$, which extends the given orders on the factors.
\end{proposition}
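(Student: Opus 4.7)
The plan is to recognize $\tau$ as the rise index of the geodesic from the basepoint $1 = 1G_0$ to the $0$-vertex $gG_0$ in the Bass-Serre tree $T$, and then invoke the two halves of Proposition~\ref{p:action-order} to convert the rise order on vertices into the stated left-invariant order on $G$. Concretely, the $G$-action on $T$ has trivial stabilizer at the $0$-vertex $1$, so Proposition~\ref{p:action-order}(b) yields the left-invariant order $g \leq h \iff g \leq_G h \iff \rise(g,h) \geq 0$ on $G$; and by Proposition~\ref{p:action-order}(a) the rise index is $G$-invariant, so $\rise(g,h) = \rise(1,g^{-1}h)$. The whole proposition therefore reduces to the identity
\[
  \tau(g) \;=\; \rise(1,g) \qquad \text{for every } g \in G.
\]

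To verify this identity, I would take $g = g_1 g_2 \cdots g_n$ in normal form, with $g_j \in G_{i_j}\setminus\{1\}$ and $i_j \ne i_{j+1}$, and write down the geodesic from $1G_0$ to $gG_0$. It alternates between $0$-vertices $g_1 \cdots g_j G_0$ and $i_j$-vertices $g_1 \cdots g_j G_{i_j}$, using the edges $(g_1 \cdots g_{j-1},i_j)$ and $(g_1 \cdots g_j,i_j)$. Since $0$-vertices are the origins and $i$-vertices are the termini of all adjacent edges, the edge-rises along this geodesic alternate rise-fall-rise-fall and cancel, giving $\risee(p_{1,g}) = 0$. The interior vertices consist of $n$ many $i$-vertices and $n-1$ many $0$-vertices, and these are where all the work happens.

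At the $i_j$-vertex $g_1 \cdots g_j G_{i_j}$, the two adjacent edges along the geodesic are $(g_1 \cdots g_{j-1}\cdot 1,\,i_j)$ and $(g_1 \cdots g_{j-1}\cdot g_j,\,i_j)$; by the definition of the local order at an $i$-vertex this contributes $+1$ precisely when $1 <_{i_j} g_j$, i.e.\ when $g_j$ is a positive syllable, and $-1$ when $g_j$ is negative. At the $0$-vertex $g_1 \cdots g_j G_0$ the two adjacent edges are $(g_1 \cdots g_j,i_j)$ and $(g_1 \cdots g_j,i_{j+1})$, and the local order there is induced by $\leq_I$, contributing $+1$ for an index jump $i_j <_I i_{j+1}$ and $-1$ for an index drop. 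Summing yields exactly the four terms in Definition~\ref{d:tau}, so $\risev(p_{1,g}) = \tau(g)$, proving the identity.

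Combining the steps, $\tau(g^{-1}h) = \rise(1,g^{-1}h) = \rise(g,h)$, so the relation in the proposition coincides with the left-invariant order from Proposition~\ref{p:action-order}(b); left-invariance is therefore free. The final point is to check that the order restricts to $\leq_i$ on each factor $G_i$: for distinct $g,h \in G_i$ the element $g^{-1}h$ has syllable length $1$, so $\tau(g^{-1}h) \in \{+1,-1\}$ records exactly the sign of $g^{-1}h$ in $(G_i,\leq_i)$, and by left-invariance this matches $g \leq_i h$. The only step that requires any care is the bookkeeping in the geodesic computation; once the conventions for edge orientations and local orders on $0$- and $i$-vertices are pinned down, the contributions assemble cleanly into $\tau$.
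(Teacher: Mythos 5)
Your proposal is correct and follows essentially the same route as the paper: reduce everything to the identity $\tau(g)=\rise(1,g)$ by computing the geodesic from $1G_0$ to $gG_0$ in the Bass-Serre tree (edge-rises cancel, $i$-vertex contributions record syllable signs, $0$-vertex contributions record index jumps/drops), then invoke Propositions~\ref{p:action-is-good} and~\ref{p:action-order} for left-invariance and check length-one syllables for the restriction to each $G_i$. The bookkeeping in your geodesic computation matches the paper's (note $g_1\cdots g_jG_{i_j}=g_1\cdots g_{j-1}G_{i_j}$, so your labeling of the $i_j$-vertex agrees with the paper's), and no gaps remain.
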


\begin{proof}
It is sufficient to show that $\tau(g) = \rise(1,g)$, where $\rise: G \times G \to \ZZ$ is the rise index function for
the Bass-Serre tree $T=(V,E)$ of $G$, as defined in Section~\ref{s:order-products}, restricted to the set of 0-vertices.

Let the  normal form of $g$ be $g=g_{1}g_{2} \dots g_{n}$, where $g_j$ is a nontrivial element of $G_{i_j}$, for $j = 1,\ldots,n$. 
The geodesic $p_{1,g}$ from the vertex 1 to the vertex $g$ in the Bass-Serre tree has length $2n$ and has the form
\begin{multline*}
  {1} \xrightarrow{(1,i_1)}
  {G_{i_1}} \xleftarrow{(g_1,i_1)}
  {g_1} \xrightarrow{(g_1,i_2)}
  {g_1G_{i_2}} \xleftarrow{(g_1g_2,i_2)}
  {g_1g_2} \xrightarrow{} {\dots} \\
  {\dots} \xleftarrow{}
  {g_1\dots g_{n-1}} \xrightarrow{(g_1\dots g_{n-1},i_n)}
  {g_1\dots g_{n-1}G_{i_n}} \xleftarrow{(g_1\dots g_n,i_n)}
  {g}
\end{multline*}
The edge-rise index of this geodesic is 0. 
Thus $\rise(1,g) = \risev(p_{1,g})$. The vertex-rise contributions of the
0-vertices along the geodesic are positive when the index goes up (with respect to $\leq _I$), and negative when the index goes
down. 
The vertex-rise contribution at $G_{i_1}$ is positive if and only if $g_1$ is a positive element of $G_{i_1}$.
Similarly, the vertex-rise contribution at $g_1G_{i_2}$ is positive if and only if $g_2$ is a positive element in
$G_{i_2}$, and so on.
Therefore $\rise(1,g) = \risev(p_{1,g})=\tau(g)$ and this shows that $\leq$ is a total order on $G$.

If $g,g' \in G_i$, for some $i \in I$, and $g <_i g'$, then $g^{-1}g'$ is a positive syllable in $G_i$, $\tau(g^{-1}g') = 1$  and $g < g'$. 
Therefore $\leq$ extends the order relation on $G_i$.
\end{proof}

\subsection{Orders on free groups} \label{subsec:explicitfreegroup}

Let $F_k$ be the free group of rank $k$, $k \geq 2$, with base $\AA_k=\{a_1,a_2,\dots,a_k\}$. We indicate how the orders on $F_k$ defined in~\cite{sunic:free-lex} (one for each $k$) and in~\cite{sunic:from-oriented} ($(2k)!$ orders for each $k$) can be obtained from our construction.

Consider the alphabet $\AA_k^\pm = \{a_1,\dots,a_k,a_1^{-1},\dots,a_k^{-1}\}$. 
The $(2k)!$ orders on $F_k$
given in~\cite{sunic:from-oriented} are parameterized by the $(2k)!$ words over the alphabet $\AA_k^\pm$ that use each letter exactly once (in particular, the order on $F_k$ from~\cite{sunic:free-lex} corresponds to the defining word
$u=a_1 \dots a_k a_k^{-1} \dots a_1^{-1}$). 
Such a word $u$ induces a total order $\preceq_u$ on $\AA_k^\pm$ by setting
$x \prec_u y$ if and only if $x$ appears to the left of $y$ in $u$; thus,  $\preceq_u$ is just an arbitrary total order on $\AA_k^\pm$. 
For a reduced group word $w$, let $\#_{w}(g)$ denote the number of occurrences of $w$ in the reduced expression for $g$.
Given a defining word $u$, define a weight function $\tau_u: F_k \to \ZZ$ by setting
\[
 \tau_u(g) = \tau_u'(g) + \omega(g),
\]
where, for a reduced group word $g \in F_k$,
\begin{equation*}
 \tau_u'(g) = 2\left(
 \sum_{\substack{a,b \in \AA_k \\ a^{-1} \prec_u b^{-1}}} \#_{ab^{-1}}(g) -
 \sum_{\substack{a,b \in \AA_k \\ b \prec_u a}} \#_{a^{-1}b}(g) +
 \sum_{\substack{a,b \in \AA_k \\ a^{-1} \prec_u b}} \#_{ab}(g) -
 \sum_{\substack{a,b \in \AA_k \\ b^{-1} \prec_u a}} \#_{a^{-1}b^{-1}}(g)
 \right) \\
\end{equation*}
and
\begin{equation*}
 \omega(g) =
  \begin{cases}
   1, & \text{if the last letter of } g \text{ is positive (i.e., it is in } \AA_k) \\
  -1, & \text{if the last letter of } g \text{ is negative (i.e., it is in } \AA_k^{-1})\\
   0, & \text{if } g \text{ is trivial.}
  \end{cases}
\end{equation*}
The weight function $\tau_u$ defines a left-invariant order $\leq_u$ on $F_k$ with positive cone
\[
 P_u = \{ g \in F_k \mid \tau_u(g) > 0 \}.
\]

We now realize the same order by using an appropriate order on the right Cayley graph $\Gamma_k=(V,E)$ of $F_k$ with respect to $\AA_k$. 
The Cayley graph $\Gamma_k$ is an oriented tree in which every vertex $g$ (element of $F_k$) has $2k$ adjacent edges, $k$ outgoing edges with labels $a_1,\dots,a_k$ and $k$ incoming edges labeled by $a_1,\dots,a_k$
(the outgoing edge labeled by $a \in \AA_k$ connects $g$ to $ga$). 
The order $\preceq_u$ on $\AA_k^\pm$ induces an order $\preceq_g$ on the $2k$ edges adjacent to the vertex $g$ in $\Gamma_k$ by identifying the outgoing edges with labels $a_1,\dots,a_k$ with the letters $a_1,\dots,a_k$, respectively, and the incoming edges with labels
$a_1,\dots,a_k$ with the letters $a_1^{-1},\dots,a_k^{-1}$, respectively.

The left action $F_k \acts \Gamma_k$ preserves the edge orientations and the local orders at every vertex, thus it preserves the induced order $\leq$ on the tree. 
Since the action is free on the set of vertices, $F_k$ inherits the left-invariant order from the set of vertices of the tree $\Gamma_k$. 
We claim that $\rise(1,g) = \tau_u(g)$, for $g \in G$, which means that the order induced on $F_k$ from the tree $\Gamma_k$ is the same as the order $\leq_u$.

Indeed, let $g$ be a  reduced group word over $\AA_k$. 
Every pair of consecutive edges in the geodesic $p_{1,g}$ from 1 to $g$ in the tree $\Gamma_k$ comes in one of the following four types (depending on the edge orientations)
\[
 {\stackrel{\mkern10mu va^{-1}}{\bullet}} \mkern-10mu
 \xrightarrow[a]{\phantom{wrt}}
 {\stackrel{v}{\bullet}}
 \xleftarrow[b]{\phantom{wrt}}\mkern-10mu
 {\stackrel{\mkern10mu vb^{-1}}{\bullet}}
 \qquad
 {\stackrel{va}{\bullet}}
 \xleftarrow[a]{\phantom{wrt}}
 {\stackrel{v}{\bullet}}
 \xrightarrow[b]{\phantom{wrt}}
 {\stackrel{vb}{\bullet}}
 \qquad
 {\stackrel{\mkern10mu va^{-1}}{\bullet}} \mkern-10mu
 \xrightarrow[a]{\phantom{wrt}}
 {\stackrel{v}{\bullet}}
 \xrightarrow[b]{\phantom{wrt}}
 {\stackrel{vb}{\bullet}}
 \qquad
 {\stackrel{va}{\bullet}}
 \xleftarrow[a]{\phantom{wrt}}
 {\stackrel{v}{\bullet}}
 \xleftarrow[b]{\phantom{wrt}} \mkern-10mu
 {\stackrel{\mkern10mu vb^{-1}}{\bullet}}
\]
for some $a,b \in \AA_k$ and some $v \in F_k$ (we are assuming that the edges drawn on the left appear earlier in the geodesic  $p_{1,g}$).

Consider a pair of edges of the first type, corresponding to an occurrence of $ab^{-1}$ in $g$. 
The first edge (the one labeled by $a$) contributes 1 to the edge-rise index. This contribution can be canceled or doubled by the vertex-rise contribution at $v$ and the doubling occurs if and only if there is a rise at $v$, which is equivalent to the condition that $a^{-1} \prec_u b^{-1}$.
Thus, the pairs of edges of the first type (type $ab^{-1}$) correspond
to the first summation term in $\tau'_u(g)$.

Similarly, the pairs of edges of the second, third, and fourth type, regarding occurrences of $a^{-1}b$, $ab$ and $a^{-1}b^{-1}$ in $g$,  correspond to the second, third, and fourth summation term in $\tau'_u(g)$.

Therefore, $\tau'_u(g)$ is equal to the sum of the edge-rise contributions of all but the last edge and all vertex-rise contributions along $p_{1,g}$. Since $\omega(g)$ is equal to the edge-rise contribution of the last edge in the geodesic $p_{1,g}$, we obtain that $\rise(1,g)=\tau'_u(g)+\omega(g)=\tau_u(g)$.

\subsection{Some well-known orders on rooted trees} \label{subsec:rooted}

There are several well-known orders on rooted trees that are used to traverse all the vertices of a rooted tree in an organized fashion. 
Two common versions of the depth-first search on a rooted tree, often used in computer science, are the top-left-right (also known as pre-order) search and the left-right-top (also known as post-order) search.

Let us describe the total order on vertices of a rooted tree associated to the top-left-right depth-first search. 
It is assumed that the children of every vertex are totally ordered. 
It is common to draw/imagine a rooted tree with total orders on the children of every vertex embedded in the plane, with the root on top, its children on a line below it, drawn from left to right in increasing order, then the children of the children on a yet lower line, drawn from
left to right in order under their parents, respectively, and so on. 
The top-left-right order of traversing the tree in Figure~\ref{f:tlr} is indicated by the numerical labels at the vertices.

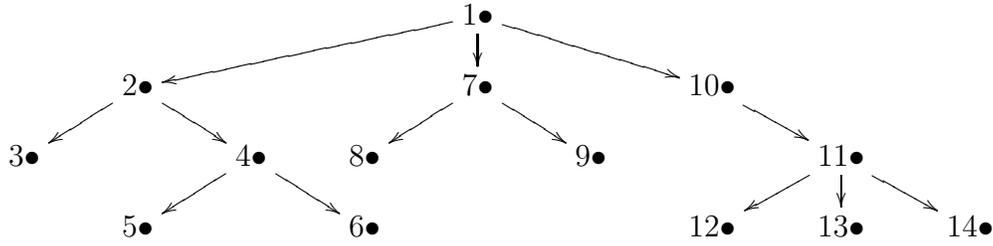
\begin{figure}[!ht]
\[
\xymatrix@R=12pt{
 &&&& 1\bullet \ar[dlll] \ar[d] \ar[drr] &&&&
 \\
 &
 2\bullet \ar[dl] \ar[dr] &&&
 7\bullet \ar[dl] \ar[dr] &&
 10\bullet \ar[dr] &&
 \\
 3\bullet &&
 4\bullet \ar[dl] \ar[dr] &
 8\bullet &&
 9\bullet &&
 11\bullet \ar[dl] \ar[d] \ar[dr] &
 \\
 &5\bullet&&6\bullet&&&12\bullet&13\bullet&14\bullet
}
\]
\caption{Top-left-right order}
\label{f:tlr}
\end{figure}

The top-left-right order may be described as follows. 
Each vertex comes in the order before all of its descendants (this is why this order is also called pre-order). 
If $v_1$ is smaller than (to the left of) its sibling $v_2$, then $v_1$ and all of its descendants are smaller than $v_2$ and all of its descendants.

In the left-right-top order each vertex comes in the order after all of its descendants (this is why this order is also called post-order), while the relative order between the descendants of the children of any vertex is the same as in the top-left-right order. 
The left-right-top order of traversing the tree in Figure~\ref{f:lrt} is indicated by the numerical labels at the vertices.

\begin{figure}[!ht]
\[
\xymatrix@R=12pt{
 &&&& \bullet14 \ar@{<-}[dlll] \ar@{<-}[d] \ar@{<-}[drr] &&&&
 \\
 &
 \bullet5 \ar@{<-}[dl] \ar@{<-}[dr] &&&
 \bullet8 \ar@{<-}[dl] \ar@{<-}[dr] &&
 \bullet13 \ar@{<-}[dr] &&
 \\
 \bullet1 &&
 \bullet4 \ar@{<-}[dl] \ar@{<-}[dr] &
 \bullet6 &&
 \bullet7 &&
 \bullet12 \ar@{<-}[dl] \ar@{<-}[d] \ar@{<-}[dr] &
 \\
 &\bullet2&&\bullet3&&&\bullet9&\bullet10&\bullet11
}
\]
\caption{Left-right-top order}
\label{f:lrt}
\end{figure}
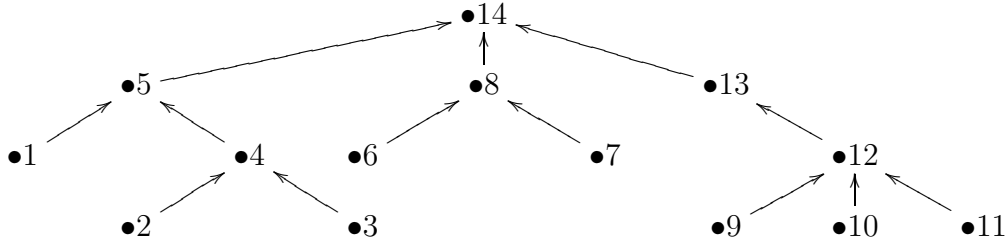

Here is a more formal description of the top-left-right, the left-right-top and a whole class of other orders on rooted
trees (including the left-top-right order, also known as in-order on binary rooted trees). 
Let $T=(V,E)$ be a rooted tree. 
For every vertex $v \in V$, let $\preceq_v$ be a total order on the set of vertices $V_v$ consisting of $v$ and all of its children (for the top-left-right order, the parent $v$ is always the smallest in $V_v$, and for the left-right-top the parent is always the largest). 
A total order $\leq$ on $V$ extending each of the orders $\preceq_v$ may be defined as follows. 
Let $x,y \in V$ be two distinct vertices and let
\[
 p_{xy} = x~e_1~v_1~e_2~v_2~\dots~v_{n-1}~e_n~y
\]
be the unique geodesic from $x$ to $y$. 
If $x$ is a descendant of $y$, then $v_{n-1}$ is a child of $y$ and we set
$x < y$ if and only if $v_{n-1} \prec_y y$. 
If none of $x$ and $y$ is a descendant of the other, then there exists three vertices $v_x$, $v$ and $v_y$ and two edges $e_x$ and $e_y$ such that
\[
 v_x~e_x~v~e_y~v_y
\]
is a piece of the geodesic $p_{xy}$ and $v$ is the closest vertex to the root on $p_{xy}$. 
Then $v_x$ and $v_y$ are two distinct children of $v$ and we set $x < y$ if and only if $v_x \prec_v v_y$.

Less formally, a child that is smaller than its parent and all of its descendants come before the parent, a child that is greater than its parent and all of its descendants come after the parent, and a smaller sibling and all of its descendants come before a greater sibling and all of its descendants.

As written, it is not even immediately obvious that $\leq$ is an order. 
However, the following proposition shows that the relation $\leq$ is equal to an order on a tree realized through a rise index. 

\begin{proposition}
Each relation $\leq$ on a rooted tree $T=(V,E)$ described above (extending given orders $\preceq_v$ for $v \in V$) can be realized as the order induced by appropriately chosen edge orientations and local orders $\preceq_v'$ on the edge sets $E_v$, for $v \in V$.
\end{proposition}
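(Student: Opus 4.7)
The plan is to exhibit explicit edge orientations on $E$ and local orders $\preceq_v'$ on each $E_v$, and to verify via Theorem~\ref{t:tree-order} that the rise order they induce on $V$ coincides with $\leq$.

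For the orientation, orient each edge joining a child $c$ to its parent $p$ as $c \to p$ if $c \prec_p p$, and as $p \to c$ if $p \prec_p c$. At a vertex $v$, call a child $c$ of $v$ \emph{small} if $c \prec_v v$ and \emph{large} if $v \prec_v c$, and define $\preceq_v'$ on $E_v$ by listing first the edges to the large children of $v$ (in their $\preceq_v$ order), then the edge from $v$ to its parent (if $v$ is not the root), and finally the edges to the small children of $v$ (in their $\preceq_v$ order).

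To verify, for distinct vertices $x, y \in V$, let $v$ be their least common ancestor and decompose the geodesic $p_{xy}$ into an upward segment from $x$ to $v$ followed by a downward segment from $v$ to $y$ (either segment may be empty). The key observation is a telescoping cancellation: at each interior vertex $u$ of the upward segment different from $v$, a direct check using the two rules shows that the vertex-rise at $u$ is exactly the negative of the edge-rise of the edge joining $u$ to its descendant child on the geodesic; an analogous cancellation holds at each interior vertex of the downward segment, paired with the edge toward its descendant child. Consequently $\rise(x, y)$ reduces to the edge-rise of the last upward edge (when present), plus the edge-rise of the first downward edge (when present), plus the vertex-rise at $v$ itself (when $v$ is interior to the geodesic).

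A short case analysis completes the proof. If $x$ is a descendant of $y$, or vice versa, one segment is empty and the reduced sum is $\pm 1$, matching exactly the ancestor-descendant clause in the definition of $\leq$. Otherwise let $c_x$ and $c_y$ be the children of $v$ on the geodesic lying below $x$ and $y$, respectively: if one of $c_x, c_y$ is small and the other large, the two reduced edge contributions sum to $\pm 2$ with the sign demanded by $\leq$ (the vertex-rise at $v$ may only shift the magnitude between $\pm 1$ and $\pm 3$); if both are on the same side of $v$ in $\preceq_v$, the two edge contributions cancel and the vertex-rise at $v$ supplies the correct sign, because $\preceq_v'$ retains the $\preceq_v$ order among children on each side. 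The delicate point of the argument---which forces the parent edge to sit between the large and the small children rather than at either extreme---is precisely to make the telescoping valid on both segments while also producing the correct comparison at the turn vertex $v$.
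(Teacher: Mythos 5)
Your proof is correct and takes essentially the same approach as the paper's: the same edge orientations and the same local orders (large-child edges first, then the parent edge, then small-child edges, each block in its $\preceq_v$ order), the same edge/vertex cancellation along the ascending and descending parts of the geodesic, and the same case analysis at the least common ancestor $v$. The only difference is presentational: the paper derives the two segment contributions by invoking its already-proved descendant case ($\rise(x,v)=\varepsilon_x$, etc.) rather than telescoping both segments directly.
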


\begin{proof}
Let $v$ be a vertex in $V$ and let
\[
 \ell_1 \prec_v \ell_2 \prec_v \dots \prec_v \ell_m \prec_v v \prec_v r_1 \prec_v r_2
\prec_v \dots \prec_v r_n
\]
be the total order $\preceq_v$ on the set $V_v$. 
Orient the edges $e_{\ell_1},\dots,e_{\ell_n}$ between the parent $v$ and the children $\ell_1,\dots,\ell_m$ that are smaller than $v$ toward $v$, and the edges $e_{r_1},\dots,e_{r_m}$ between $v$ and the children $r_1,\dots,r_n$ that are greater than $v$ toward the children. 
Let $e_{v}$ be the edge from $v$ to its parent (if it exists).
Define a local order $\preceq_v'$ on $E_v$ by setting
\[
 e_{r_1} \prec_v' e_{r_2} \prec_v' \dots \prec_v' e_{r_n}  \prec_v' e_v \prec_v'  e_{\ell_1}
\prec_v' e_{\ell_2} \prec_v' \dots \prec_v' e_{\ell_m},
\]
with the understanding that $e_v$ should be omitted if $v$ is the root.

Let $\leq'$ be the order on $V$ induced by the edge orientations and the local orders $\preceq_v'$ on $E_v$, for $v \in V$.

Let $x$ and $y$ be two distinct vertices such that $x$ is a descendant of $y$ and
\[ p_{xy}=x~e_1^{\varepsilon_1}~v_1~e_2^{\varepsilon_2}~v_2~\dots~v_{n-1}~e_n^{\varepsilon_n}~y.
\]
We claim that $\rise(p_{xy})=\varepsilon_n$. 
This follows from the fact that the edge-rise contribution of $e_i$ and
the vertex-rise contribution of $v_i$ cancel each other for $i=1,\dots,n-1$. 
Indeed, if the edge-rise contribution of $e_i$ is positive then the child $v_{i-1}$ (with the understanding $v_0=x$, when $i=1$) is smaller than the
parent $v_i$ under $\preceq_{v_i}$, which means that the parent edge $e_{i+1}$ is smaller than the child edge $e_i$ under $\preceq_{v_i}'$, i.e., the vertex-rise contribution at $v_i$ is negative. 
Similarly, if the edge-rise contribution of $e_i$ is negative, the vertex-rise contribution at $v_i$ is positive. 
Since $\rise(p_{xy})=\varepsilon_n$, $x <' y$ if and only if $\varepsilon_n$ is 1, which is equivalent to $v_{n-1} \prec_y y$.

Let $x$ and $y$ be two distinct vertices none of which is a descendant of the other and let
\[
 v_x~e_x^{\varepsilon_x}~v~e_y^{\varepsilon{y}}~v_y
\]
be the piece of the geodesic $p_{xy}$ such that $v$ is the closest vertex to the root on $p_{xy}$. 
We have
\[
 \rise(x,y) = \rise(x,v) + \rise(v,y) + \rho_v = \rise(x,v) - \rise(y,v) + \rho_v = \varepsilon_x -
(- \varepsilon_y) + \rho_v = \varepsilon_x +\varepsilon_y + \rho_v,
\]
where $\rho_v$ is the vertex-rise contribution at $v$ along
$p_{xy}$. 
If $\varepsilon_x$ and $\varepsilon_y$ cancel, then both children $v_x$ and $v_y$ are smaller or both are greater than the parent $v$ under $\preceq_v$, and $x <' y$ if and only if $\rho_v=1$, which is equivalent to $e_x \prec_v' e_y$, and this to $v_x \prec_v v_y$.
Otherwise, one of the children $v_x$ and $v_y$ is smaller and the other larger than $v$ under $\preceq_v$. In this case, $x<'y$ if and only if $\varepsilon_x=\varepsilon_y=1$, which happens if and only if $v_x$ is the child smaller than $v$ and $v_y$ the child larger than $v$ under $\preceq_v$, i.e., $v_x \prec_v v_y$.

Therefore, $\leq'$ and $\leq$ are the same.
\end{proof}

\begin{remark}
Note that the order associated to the standard breadth-first search on a rooted tree can also be induced by appropriate edge orientations and local orders at all vertices.

The breadth-first order can be defined as follows. 
All vertices closer to the root come before all vertices that are further from the root.
The children of any parent $v$ respect the preassigned order $\preceq_v$ on $V_v$, for $v \in V$.  
Vertices $x$ and $y$ at the same distance from the root that do not have a common parent inherit the order, recursively, from the parents of $x$ and $y$.

If, for $v \in V$,
\[
  v \prec_v r_1 \prec_v r_2 \prec_v \dots \prec_v r_n
\]
is the total order $\preceq_v$ on the set $V_v$, define a local order $\preceq_v'$ on $E_v$, for $v \in V$, by setting
\[
 e_v \prec_v' e_{r_1} \prec_v' e_{r_2} \prec_v' \dots \prec_v' e_{r_n},
\]
and orient all edges away from the root. 
The induced order is precisely the breadth-first order. 
This is an easy corollary of the observation that, if $y$ is a descendant of $x$ at distance $k$, $k \geq 1$, in the tree, then $\rise(x,y) = 2k-1$.
\end{remark}

\def\cprime{$'$}

%\bibliographystyle{alpha}
%\bibliography{../smath}

\end{document}